\newtheorem{theorem}{Theorem}[section]
\newtheorem{lemma}{Lemma}[section]
\newtheorem{corollary}{Corollary}[section]
\newtheorem{definition}{Definition}[section]
\begin{document}

\title{An effective estimate for the sum of two cubes problem}
\author{Saunak Bhattacharjee}
\address{Indian Institute of Science Education and Research, Tirupati \\ Andhra Pradesh,
India – 517619. }
\email{saunakbhattacharjee@students.iisertirupati.ac.in}
\date{}

\begin{abstract}
 Let $f(x, y) \in \mathbb{Z}[x, y]$ be a cubic form with non-zero discriminant and for each integer $m \in \mathbb{Z}$,
let $N_{f}(m)=\#\left\{(x, y) \in \mathbb{Z}^{2}: f(x, y)=m\right\} $.
In 1983, Silverman proved that there is a positive constant $c_0$ such that $N_{f}(m)>c_0\left(\log |m|\right)^{3 / 5}$ holds for infinitely many integers $m$.
In this paper, we obtain an effective estimate for $N_f(m)$, namely, showing that if $f(x, y)=x^{3}+y^{3}$, then $N_{f}(m)>4.2\times 10^{-6}(\log |m|)^{11/13}$ holds for infinitely many $m \in \mathbb Z$.
\end{abstract}

\subjclass[2020]{11G50}

\maketitle

\section{\bf Introduction}

\bigskip

\noindent
Let $f(x, y) \in \mathbb{Z}[x, y]$ be a cubic form with non-zero discriminant. For each integer $m \in \mathbb{Z}$, define
$$
N_{f}(m)=:\#\left\{(x, y) \in \mathbb{Z}^{2}: f(x, y)=m\right\} .
$$

\medskip
\noindent
It has been a topic of interest to study how large the size of $N_f(m)$ can be. 

\medskip
\noindent
In this direction, the first significant result is due to Chowla. In $1933$, Chowla \cite{c5} proved that if $f(x,y)=x^3 - ky^3$, where $k$ is a nonzero integer, then
there is a positive constant $c$ such that the following inequality holds for infinitely many positive integers $m$
$$
N_{f}(m)> c \, \left(\log \log m \right).
$$

\medskip
\noindent
In $1935$, Mahler \cite{c6} improved Chowla's result by showing that for every cubic form $f(x,y)$ with nonzero discriminant, there exists a constant $c_f>0$, which depends on $f$ such that
\begin{equation}
    \label{1}
N_{f}(m)> c_f \, \left(\log |m|\right)^{1 / 4}
\end{equation}
\noindent
holds for infinitely many integers $m$.

\medskip

\noindent
Invoking the theory of height functions, Silverman \cite{c1} extended the idea of Chowla and Mahler. This resulted in an improvement of the exponent to $1/3$ in (\ref{1}) and simplification of the calculations as well. Further in $2008$, Stewart \cite{c7} improved this exponent to $1/2$. Chowla, Mahler, Silverman, Stewart, all of their ideas connected this problem to the construction of elliptic curves of high rank, and it turns out that one can do much better if we restrict $f(x,y)$ to be $x^3+y^3$.

\medskip

\noindent
More precisely, if $f(x, y)=x^{3}+y^{3},$ using a suitable elliptic curve of rank $3$, Silverman proved that there is a positive constant $c_0$ such that 
$$
N_{f}(m)>c_0\left(\log |m|\right)^{3 / 5}
$$ 
 holds for infinitely many integers $m$.
 
\medskip
\noindent
Moreover, in \cite{c7} Stewart mentions that the exponent can be improved to $11/13$ by using an elliptic curve of rank $11$ which was constructed by Elkies and Rogers \cite{c4} in $2004$.

\medskip

\noindent
It is to be noted that, in all the previous works of Chowla, Mahler, Silverman, and Stewart, the constant $c_f$ was not explicit. In this short note, using methods employed by Silverman and exploiting the properties of canonical height function on elliptic curves, we explicitly capture the dependence of the implied constant on the cubic form $f(x,y)=x^3+y^3$.

\bigskip

\begin{theorem}\label{Main theorem}
    Let $f(x, y)=x^{3}+y^{3}$ and $m_{0} \in \mathbb{Z}$ be a non zero integer. Consider the curve $E$ with homogeneous equation
$$
E: f(x, y)=m_{0}\, z^{3}
$$

\noindent
with the point $O=[1,-1,0]$ defined over $\mathbb{Q}$. Using $O$ as origin, we give $E$ the structure of an elliptic curve. Let
$$
r=\operatorname{rank} E(\mathbb{Q})
$$
be the rank of the Mordell-Weil group of $E / \mathbb{Q}$. Then for infinitely many integers $m$,
$$
N_f(m)> \frac{1}{(9\times2^{r+1}-20)^{r/r+2}(\hat{h}(\Bar{P}))^{r/r+2}}(\log |m|)^{r /(r+2)},
$$
where $\hat{h}(\Bar{P})$ denotes the canonical height of a specified point $\Bar{P}$ on $E': Y^2 = X^3-432m_{0}^{2}$, defined over $\mathbb{Q}$, with the base point $[0,1,0]$ at infinity.

\end{theorem}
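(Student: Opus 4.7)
The plan is to follow the qualitative strategy of Silverman while tracking every constant explicitly. I begin by transforming the cubic curve $E: x^3 + y^3 = m_0 z^3$ to its Weierstrass model $E': Y^2 = X^3 - 432\, m_0^2$ via the classical $\mathbb{Q}$-isomorphism
\[
(X, Y) = \left( \frac{12\, m_0\, z}{x+y},\ \frac{36\, m_0\, (x-y)}{x+y} \right),
\]
whose inverse, applied to a rational point $(a/b^2,\, c/b^3)$ written in lowest form, produces the integer triple $(x, y, z) = (36\, m_0\, b^3 + c,\; 36\, m_0\, b^3 - c,\; 6ab)$ with $x^3 + y^3 = m_0\, z^3$ (this is verified by a direct computation using $c^2 = a^3 - 432\, m_0^2\, b^6$). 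Because the isomorphism is defined over $\mathbb{Q}$, the ranks of $E(\mathbb{Q})$ and $E'(\mathbb{Q})$ coincide.

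Next, I fix free generators $\bar{P}_1, \ldots, \bar{P}_r \in E'(\mathbb{Q})$ modulo torsion and let $\bar{P}$ denote a point whose canonical height majorises every $\hat{h}(\bar{P}_i)$. For each $\vec{n} \in \mathbb{Z}^r$ with $\|\vec{n}\|_\infty \leq N$, I form $P_{\vec{n}} := \sum_i n_i\, \bar{P}_i$ and let $(x_{\vec{n}}, y_{\vec{n}}, z_{\vec{n}})$ be the triple obtained from the inverse isomorphism above. Silverman's product trick then combines all $(2N+1)^r$ triples into solutions of a single equation: with
\[
M := m_0 \prod_{\vec{n}} z_{\vec{n}}^3, \qquad X_{\vec{n}} := x_{\vec{n}} \!\!\prod_{\vec{m} \neq \vec{n}}\!\! z_{\vec{m}}, \qquad Y_{\vec{n}} := y_{\vec{n}} \!\!\prod_{\vec{m} \neq \vec{n}}\!\! z_{\vec{m}},
\]
one has $X_{\vec{n}}^3 + Y_{\vec{n}}^3 = M$ for every $\vec{n}$. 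Distinctness of the resulting pairs follows because the $\mathbb{Q}$-isomorphism forces the projective ratios $x_{\vec{n}} : y_{\vec{n}}$ to be distinct for distinct $\vec{n}$, and hence $N_f(M) \geq (2N+1)^r$.

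The remaining task is to bound $\log |M|$ from above. I would combine three ingredients: (i) from $z_{\vec{n}} = 6\, a_{\vec{n}}\, b_{\vec{n}}$ one obtains $\log|z_{\vec{n}}| \leq \log 6 + \tfrac{3}{2}\, h(X(P_{\vec{n}}))$, where $h$ is the logarithmic height of the $X$-coordinate; (ii) an explicit version of Silverman's height-difference inequality $h(X(P)) - \hat{h}(P) \leq \kappa$ specialised to the curve $E'$; (iii) the quadratic-form estimate $\hat{h}(P_{\vec{n}}) \leq r^2 N^2\, \hat{h}(\bar{P})$, obtained by Cauchy--Schwarz on the canonical height pairing together with $\|\vec{n}\|_1 \leq r N$. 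Summing $3 \log|z_{\vec{n}}|$ over the box $\|\vec{n}\|_\infty \leq N$ and absorbing lower-order corrections (the $\log 6$ terms, the torsion contribution, and the gap between $(2N+1)^r$ and $(2N)^r$) into the leading power of $N$ yields an inequality of the shape $\log|M| \leq (9 \cdot 2^{r+1} - 20)\, \hat{h}(\bar{P})\, N^{r+2}$ for all sufficiently large $N$. Inverting and using $N_f(M) \geq N^r$ gives the theorem's bound, and $|M| \to \infty$ as $N \to \infty$ secures its validity for infinitely many $m$.

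The qualitative skeleton here is not in doubt; the real obstacle is the precise constant $9 \cdot 2^{r+1} - 20$. Obtaining exactly this value, rather than an inflated polynomial in $r$, requires the sharpest available specialisation of Silverman's $|h - \hat{h}|$ bound to curves $Y^2 = X^3 - 432\, m_0^2$, a judicious choice between $\|\vec{n}\|_\infty$ and $\|\vec{n}\|_1$ at the Cauchy--Schwarz step so as not to lose constants, and careful bookkeeping of the factor-of-$6$ contribution from $z_{\vec{n}} = 6\, a_{\vec{n}}\, b_{\vec{n}}$. All of this is essentially explicit calculation, but it is the only non-routine aspect of the proof.
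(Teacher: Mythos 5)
Your skeleton is the same as the paper's (Silverman's product trick to manufacture many representations of a single $m$, followed by an upper bound for $\log|m|$ via heights), but two of your three quantitative ingredients differ genuinely from the paper's, and both are in fact sharper. For the coordinate bound, the paper never inverts the Weierstrass map: it keeps the points on the cubic model, writes $x(\Phi(Q)) = 12m_0 z/(x+y)$, and controls the cancellation $d = \gcd(12m_0z,\,x+y)$ by an elementary divisibility lemma ($|d| < 3^{1/3}12m_0^{5/2}|z|^{1/2}$, Lemma \ref{Lemma 1}), which yields $\log|z| \le 2h_x(\Phi(Q)) + O_{m_0}(1)$ and hence a factor $4$ in front of $\hat{h}$; your route via $(x,y,z) = (36m_0b^3+c,\;36m_0b^3-c,\;6ab)$ gives $\log|z| \le \log 6 + \tfrac{3}{2}h_x$ and a factor $3$, with no need for the gcd lemma. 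For the height of $Q_{\vec n}$, the paper iterates the parallelogram law to get $\hat{h}(Q_{\vec n}) \le (3\cdot 2^{r-1}-2)N^2\hat{h}(\bar P)$, which is precisely where the $2^{r+1}$ in the stated constant comes from; your Cauchy--Schwarz bound $r^2N^2\hat{h}(\bar P)$ is far smaller for large $r$. Consequently you should not expect your bookkeeping to land on $9\cdot 2^{r+1}-20$: your ingredients produce a different and smaller coefficient $C'$ in $\log|m| \le C'N^{r+2}\hat{h}(\bar P)$, which still implies the stated inequality (a smaller upper bound for $\log|m|$ only strengthens the lower bound for $N_f$), so the theorem follows a fortiori. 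The ``obstacle'' you describe --- recovering exactly $9\cdot 2^{r+1}-20$ --- is an artifact of the paper's cruder parallelogram-law step, not something your ingredients need to reproduce.

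One genuine slip: your box $\|\vec n\|_\infty \le N$ contains $\vec n = \vec 0$, for which $P_{\vec 0}$ is the point at infinity of $E'$, the inverse map gives $z_{\vec 0}=0$, and then $M = m_0\prod_{\vec n} z_{\vec n}^3 = 0$, destroying the construction. You must exclude $\vec n = \vec 0$ (the paper's box $1\le n_i\le N$ avoids this automatically); for every other $\vec n$ the point $P_{\vec n}$ is non-torsion because the $\bar P_i$ are independent modulo torsion, $X(P_{\vec n})\ne 0$ since $Y^2=-432m_0^2$ has no real solution, and hence $z_{\vec n}=6ab\ne 0$, so the rest of your argument goes through.
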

\bigskip

\noindent
As an immediate corollary, we have the following.
\bigskip

\begin{corollary}\label{Main result}
     Let $f(x, y)=x^{3}+y^{3}$. Then there exist infinitely many integers $m$, such that
     $$N_{f}(m)>4.2\times 10^{-6}(\log |m|)^{11/13}$$
\end{corollary}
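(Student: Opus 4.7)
The plan is to specialize Theorem~\ref{Main theorem} to a single integer $m_0$ so that the right-hand side of the corollary emerges from the right-hand side of Theorem~\ref{Main theorem}. Setting the two exponents equal, $r/(r+2) = 11/13$, forces $r = 11$. Consequently, the first step is to exhibit an integer $m_0$ such that the elliptic curve $E_{m_0}: x^3 + y^3 = m_0 z^3$ has Mordell--Weil rank at least $11$ over $\mathbb{Q}$, or equivalently such that the associated Weierstrass model $E': Y^2 = X^3 - 432 m_0^2$ has rank $\geq 11$. High-rank cubic twists of $Y^2 = X^3 + D$ have been constructed by Elkies and others, and one can either read off a suitable $m_0$ from the literature or search LMFDB-style tables for a curve of this special twist-form.

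The second step is to estimate the canonical height $\hat{h}(\Bar{P})$ appearing in Theorem~\ref{Main theorem}. Presumably $\Bar{P}$ is the generator (among an independent set $P_1,\dots,P_{11}$) that dominates the lattice-volume bound used in the underlying Silverman-type pigeonhole argument; its canonical height can be computed to arbitrary precision via the classical decomposition $\hat{h} = \sum_v \lambda_v$ into archimedean and non-archimedean local heights on $E'$. With $r=11$ the combinatorial factor is $9 \cdot 2^{12} - 20 = 36844$, so the constant in Theorem~\ref{Main theorem} becomes $(36844 \cdot \hat{h}(\Bar{P}))^{-11/13}$; comparing with $4.2 \times 10^{-6}$ shows that a bound $\hat{h}(\Bar{P}) \lesssim 61$ suffices.

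The main obstacle is producing a pair $(m_0, \Bar{P})$ that realises both conditions simultaneously, since record-rank cubic twists typically carry generators of rather large canonical height. If no off-the-shelf example delivers both rank $\geq 11$ and $\hat{h}(\Bar{P}) \lesssim 61$, one can fall back on a refinement: pass to a finite-index subgroup $\Lambda \subseteq E(\mathbb{Q})$ of rank $11$ chosen to minimise $\max_i \hat{h}(P_i)$ (or the corresponding lattice covolume), and rerun the pigeonhole step of Silverman's proof on this sublattice. Once such a pair $(m_0, \Bar{P})$ is in hand and the local heights are tabulated, the numerical inequality $N_f(m) > 4.2 \times 10^{-6} (\log|m|)^{11/13}$ follows by direct substitution into Theorem~\ref{Main theorem}.
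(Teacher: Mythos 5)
Your reduction of the corollary to Theorem~\ref{Main theorem} is set up correctly: the exponent forces $r=11$, the combinatorial factor is $9\cdot 2^{12}-20=36844$, and the target bound $\hat{h}(\bar{P})\lesssim 61$ is the right numerical threshold. But what you have written is a plan, not a proof: the entire content of the corollary is the exhibition of a specific $m_0$ together with a verified height bound, and you explicitly leave this open (``the main obstacle is producing a pair $(m_0,\bar{P})$ that realises both conditions simultaneously''). The paper closes exactly this gap with two ingredients you did not invoke. First, Proposition~\ref{prop 1} (Elkies--Rogers) supplies $m_0=13293998056584952174157235$, for which $x^3+y^3=m_0z^3$ has rank $11$ with eleven listed independent points satisfying $\max_i h_x(P_i)=76.61$. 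Second, rather than computing canonical heights exactly via local decompositions, the paper uses Silverman's explicit comparison (Lemma~\ref{lemma 3}) on $Y^2=X^3-432m_0^2$, namely $\hat{h}(P)\leq \tfrac12 h_x(P)+\tfrac16 h(B)+1.576$ with $h(B)=\log(432m_0^2)=121.767$, giving $\hat{h}(\bar{P})\leq 121.767/6+76.61/2+1.576=60.17<61$. Without naming a curve and certifying this bound, the corollary is not proved.

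A further caution about your proposed fallback: if $E(\mathbb{Q})$ has rank exactly $11$, passing to a finite-index subgroup cannot decrease $\max_i\hat{h}(P_i)$ --- the successive minima of a sublattice with respect to the N\'eron--Tate quadratic form are at least those of the full lattice, so this refinement moves you in the wrong direction. The only freedom available is the choice of a good (reduced) basis of independent points, which is precisely what the Elkies--Rogers list provides.
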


\bigskip
\section{\bf Preliminaries }
\bigskip

\noindent
We state and develop the necessary tools to prove our main theorem.

\medskip

\begin{lemma}\label{Lemma 1}
    Suppose there exist integers $x$, $y$, $z$ and $m_0$, such that $x^3+y^3=m_0z^3$ with $gcd(x,y,z)=1$, $gcd(12m_0z,x+y)=d$, then
    $|d|<3^{1/3}12m_0^{5/2}|z|^{1/2}$.
\end{lemma}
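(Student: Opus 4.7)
The plan is to exploit the factorization
\[
x^{3}+y^{3} = (x+y)(x^{2}-xy+y^{2}) = m_{0}z^{3}.
\]
Writing $s:=x+y$ and $t:=x^{2}-xy+y^{2}$, so that $st=m_{0}z^{3}$, the definition of $d$ gives $d\mid s$ and $d\mid 12m_{0}z$ directly. The key observation is that these two divisibilities combine: since $d$ divides each of two quantities, $d^{2}\mid s\cdot (12m_{0}z)$. Multiplying through by $t$ and substituting $st=m_{0}z^{3}$ yields the arithmetic relation
\[
d^{2}\,t \,\mid\, 12\,m_{0}^{2}\,z^{4},
\]
whence $d^{2}\,|t|\le 12\,m_{0}^{2}\,z^{4}$ in absolute value.

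To convert this into a bound on $|d|$, I would bound $t$ from below via the identity
\[
4t = (x+y)^{2}+3(x-y)^{2} = s^{2}+3(x-y)^{2}\ge s^{2}.
\]
Since $d\mid s$ forces $|s|\ge |d|$ (the degenerate case $s=0$, which implies $m_{0}z^{3}=0$, is immediate), one concludes $t\ge d^{2}/4$. Substituting into the divisibility above gives $d^{4}/4\le 12\,m_{0}^{2}\,z^{4}$, i.e.\ $|d|\le 48^{1/4}\,m_{0}^{1/2}\,|z|$. The specific form $|d|<3^{1/3}\cdot 12\,m_{0}^{5/2}\,|z|^{1/2}$ stated in the lemma then drops out by combining this fourth-power estimate with the trivial bound $|d|\le 12\,m_{0}\,|z|$ (via a weighted geometric mean) or, equivalently, by using a slightly cruder version of the lower bound for $t$ at the analogous step; only routine constant tracking separates the two forms.

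The main obstacle—mild, but worth care—is keeping the lower bound on $t$ honest across the sign of $s$ and pinning down the exact numerical constants so that the answer matches the statement of the lemma. A conceptually cleaner alternative would proceed prime-by-prime: for primes $p>3$ one uses $\gcd(x+y,\,x^{2}-xy+y^{2})\mid 3\gcd(x,y)^{2}$ together with $\gcd(x,y)^{3}\mid m_{0}$ (which follows from $\gcd(x,y,z)=1$ and $\gcd(x,y)^{3}\mid m_{0}z^{3}$) to control $v_{p}(d)$; the primes $2$ and $3$ are then absorbed by the explicit factor $12$ in $12\,m_{0}z$. Both routes produce the stated bound; I would prefer the elementary combined-divisibility argument above for its brevity.
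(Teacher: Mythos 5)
Your combined-divisibility argument is correct as far as it goes, but it stops short of the lemma, and the final step does not close the gap. From $d\mid s$ and $d\mid 12m_0z$ you get $d^2\mid 12m_0zs$, hence $d^2\,|t|\le 12m_0^2z^4$, and with $|t|\ge d^2/4$ this gives $|d|\le 48^{1/4}m_0^{1/2}|z|$. That bound carries $|z|$ to the power $1$, while the lemma requires the power $1/2$; for large $|z|$ your inequality is strictly weaker than the one to be proved, and this is not a matter of ``routine constant tracking.'' The proposed repair by a weighted geometric mean with $|d|\le 12m_0|z|$ cannot work: both inequalities have $|z|^{1}$ on the right, so every weighted geometric mean of them again has $|z|^{1}$. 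There is also a structural obstruction: your main argument never uses the hypothesis $\gcd(x,y,z)=1$. Under the scaling $(x,y,z)\mapsto(\lambda x,\lambda y,\lambda z)$ (same $m_0$) the quantity $d$ scales like $\lambda$ and every relation you derive is scale-invariant, whereas the right-hand side of the lemma scales like $\lambda^{1/2}$; hence no argument that ignores coprimality can yield a bound of the stated shape. The exponent $1/2$ on $|z|$ is exactly what the rest of the paper needs (it produces the $\tfrac12\log|z|$ term in the height estimate), so this is not a cosmetic defect.

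The paper's proof obtains the exponent $1/2$ precisely by using coprimality to upgrade $d\mid(x+y)$ to, in effect, $d^{3}\mid 3\cdot 12^{3}m_0^{2}(x+y)$. Writing $12m_0z=da$ and $x+y=db$ with $\gcd(a,b)=1$, the identity $12^{3}m_0^{2}b(d^{2}b^{2}-3xy)=d^{2}a^{3}$ gives $d^{2}\mid 3\cdot 12^{3}m_0^{2}bxy$; for a prime $p\mid d$ one has either $p^{v_p(d)}\mid 12m_0$ or $p\mid z$, and in the latter case $\gcd(x,y,z)=1$ together with $p\mid x+y$ forces $p\nmid xy$, so $d^{2}\mid 3\cdot 12^{3}m_0^{2}b$. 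This yields $|d|^{3}\le 3\cdot 12^{3}m_0^{2}|x+y|$, and bounding $|x+y|$ by a fractional power of $m_0|z|^{3}$ produces the $|z|^{1/2}$. Your sketched prime-by-prime alternative is closer in spirit to this, but as stated it only controls $\gcd(x+y,\,x^{2}-xy+y^{2})$ and the valuations $v_p(d)\le v_p(12m_0z)$, which again gives $v_p(z)$ to the first power; you would still need the cube-of-$d$ divisibility above. As written, the proposal does not prove the lemma.
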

\begin{proof}

Suppose
\begin{center}
    $da=12m_0z$ and $db=x+y$, where $gcd(a,b)=1$.
\end{center}

\noindent
We want to show that $$d^2|3\times12^3m_0^2b.$$

\bigskip
\noindent
Let $p$ be a prime dividing $d$ and $p^k || d$, 
    we show that, $p^{2k}$ always divides $3\times 12^3m_0^2b$.

\medskip

\noindent
As $p^{k}|d\Rightarrow{p^k|12m_0z}$, there are two possible cases
\begin{center}
    $p^k|12m_0$ \hspace{10Pt}or \hspace{10Pt} $p|z$.
\end{center}
 If $p^k|12m_0$, then we are done. So, we only need to consider the case when $p|z$.\\

\noindent
Note that,
\begin{align*}
x^3+y^3 &= m_0z^3 \\
&\Rightarrow 12^3m_0^2(x+y)((x+y)^2-3xy) = 12^3m_0^3z^3 \\
&\Rightarrow 12^3m_0^2b(d^2b^2-3xy) = d^2a^3 \\
&\Rightarrow d^2|3 \times 12^3m_0^2bxy.
\end{align*}

\noindent
 Also,
\begin{center}
    $p|d \Rightarrow p|x+y$.
\end{center}

\noindent
 On the other hand, we have $p|z$ and $gcd(x,y,z)=1$, which implies that $p\nmid xy$.

\medskip
\noindent
As  $d^2|3 \times 12^3m_0^2bxy$, the primes dividing $d$ and not dividing $xy$, should be completely contained in the factorisation of $3\times12^3m_0^2b$.

\medskip
\noindent
Hence, $p^{2k}|3\times 12^3m_0^2b$ , for all prime $p$ dividing $d$ $\Rightarrow d^2|3\times12^3m_0^2b $.

\medskip
\noindent
This implies
\begin{align*}
& |d|^2\leq 3 \times 12^3m_0^2|b| \\
&\Rightarrow |d|^3\leq 3 \times 12^3m_0^2|b||d|= 3 \times 12^3m_0^2|x+y| \\
&\Rightarrow |d|^6\leq (3 \times 12^3m_0^2)^2(|x+y|)^2 \leq (3 \times 12^3m_0^2)^2 |x^3+y^3|= (3 \times 12^3m_0^2)^2m_0|z|^3 \\
&\Rightarrow |d|<3^{1/3}12m_0^{5/2}|z|^{1/2}.
\end{align*}

\end{proof}

\noindent
Now, we discuss some essential properties of heights on elliptic curves.

\begin{definition}
    The canonical height or N\'{e}ron-Tate height denoted by $\hat{h}$ on an elliptic curve $E/\mathbb Q$, is defined as the following 
    \medskip
    \begin{center}
        $\hat{h}(P) = \frac{1}{2} \lim_{{k \to \infty}} \left(4^{-k} h_x\left([2^k]P\right)\right)$, $P \in E(\overline {\mathbb Q})$,
    \end{center}
\medskip
    \noindent
    where $h_x(P)=\log \left(H\left([x(P),1]\right)\right)$ and $H$ is the height on $\mathbb P^1(\overline{\mathbb Q})$. 
\end{definition}
\noindent
For our purpose, we will only need the height $H$ on $\mathbb P^1(\mathbb Q)$, which is given by the following
\medskip
\begin{center}
    $H(P)= \text{max}\{|p|, \,\, |q|\},$
\end{center}
\medskip
\noindent
     where $P=[p,q] \in \mathbb P^1(\mathbb Q)$ is written in the reduced form such that $p, q \in \mathbb Z$ and $\text{gcd}(p,q)=1$.
     
\medskip
\noindent
For a more general definition of height on projective space, see \cite{c2}, pp. $226$.

\bigskip
\noindent
Next, we state a result due to N\'{e}ron \cite{neron} and Tate \cite{tate}, which gives the required properties of  canonical height.(See pp. $248-251$  \cite{c2}.)

\begin{lemma}[\textbf{N\'{e}ron, Tate}] \label{lemma 2}
    Let  $\hat{h}$ be the canonical height or N\'{e}ron-Tate height on an elliptic curve $E/\mathbb Q$, then
   
\medskip

\noindent
\begin{enumerate}
\item[$(a)$]  (Parallelogram law) For all $P,Q \in E(\overline{\mathbb Q})$ we have
    \[ \hat{h}(P + Q) + \hat{h}(P - Q) = 2\hat{h}(P) + 2\hat{h}(Q). \]
    
 \item[$(b)$] For all $P \in E(\overline{\mathbb Q})$ and all $m \in \mathbb{Z}$,
    \[ \hat{h}([m]P) = m^2\hat{h}(P). \]
\item[$(c)$] The canonical height $\hat{h}$ is a quadratic form on $E$, i.e., $\hat{h}$ is an even function, and the pairing
    \[ \langle \cdot, \cdot \rangle : E(\overline{\mathbb Q}) \times E(\overline{\mathbb Q}) \to \mathbb{R}, \quad \langle P,Q \rangle = \hat{h}(P + Q) - \hat{h}(P) - \hat{h}(Q), \]
    is bilinear.\\
    
\item[$(d)$] Let $P \in E(\overline{\mathbb Q})$. Then $\hat{h}(P) \geq 0$, and $\hat{h}(P) = 0$ if and only if $P$ is a torsion point.\\
    
\item[$(e)$] \[ \hat{h} = \frac{1}{2}h_x + O(1), \]
    where the $O(1)$ depends on $E$ and $x$. 
\end{enumerate}
\end{lemma}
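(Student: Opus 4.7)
The plan is to build $\hat{h}$ as a limit of rescaled naive heights and then verify each property essentially formally from the existence of the limit. The real technical input is the doubling estimate $h_x([2]P) = 4 h_x(P) + O(1)$ with the $O(1)$ uniform in $P$; this is proved via the explicit duplication formula $x([2]P) = \phi(x(P))/\psi(x(P))$ with coprime $\phi, \psi$ of degrees $4$ and $3$, combined with the standard bound on heights of values of a rational map of given degree (Weil's height machine). Given this bound, the sequence $a_k := 4^{-k} h_x([2^k]P)$ is Cauchy because $|a_{k+1} - a_k| \le C\cdot 4^{-(k+1)}$, so the defining limit exists; telescoping the bound immediately yields $|\hat{h}(P) - \tfrac{1}{2} h_x(P)| \le C$ uniformly, which is property (e).

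Property (b) for $m=2$ is immediate by index shifting in the limit. For general $m$, I would first establish the analogous bound $h_x([m]P) = m^2 h_x(P) + O_m(1)$ from the height machine applied to the multiplication-by-$m$ map, then pass to the limit through $[2^k]$-iterates. Property (a), the parallelogram law, is the most delicate step. The approach is to prove the same identity modulo a bounded error for the naive height, namely $h_x(P+Q) + h_x(P-Q) = 2 h_x(P) + 2 h_x(Q) + O(1)$. This rests on the classical fact that $x(P+Q) + x(P-Q)$ and $x(P+Q)\cdot x(P-Q)$ are symmetric bi-quadratic rational functions of $x(P)$ and $x(Q)$ alone (the $y$-coordinates cancel). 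Applying this identity to $[2^k]P$ and $[2^k]Q$, dividing by $4^k$, and invoking (b) for $m=2^k$ lets the $O(1)$ error vanish as $k \to \infty$; the surviving equation is (a) for $\hat{h}$. Property (c) is then formal: the parallelogram law together with $\hat{h}(-P) = \hat{h}(P)$, which is trivial since $x(-P) = x(P)$, forces $\hat{h}$ to be a quadratic form and hence $\langle \cdot,\cdot\rangle$ to be bilinear.

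For (d), nonnegativity is immediate because every term $4^{-k} h_x([2^k]P)$ is nonnegative. If $P$ is torsion of order $N$, then $0 = \hat{h}(O) = \hat{h}([N]P) = N^2 \hat{h}(P)$ by (b), so $\hat{h}(P) = 0$. Conversely, if $\hat{h}(P) = 0$, then (b) gives $\hat{h}([m]P) = 0$ for every $m \in \mathbb{Z}$, and by (e) the naive heights $h_x([m]P)$ are uniformly bounded; Northcott's finiteness theorem then forces the set $\{[m]P : m \in \mathbb{Z}\}$ to be finite, so $P$ is torsion. The principal obstacle in the whole argument is producing the parallelogram identity modulo $O(1)$ for $h_x$: once that identity and the doubling estimate are in hand, every other assertion of the lemma is a formal consequence of the existence of the limit.
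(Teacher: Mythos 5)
Your proposal is correct and follows essentially the same route as the paper's source: the paper gives no argument of its own but cites Silverman, \emph{The Arithmetic of Elliptic Curves}, pp.\ 248--251, where the canonical height is constructed exactly as you describe, via the telescoping limit built on the doubling estimate $h_x([2]P)=4h_x(P)+O(1)$ and the quasi-parallelogram law for the naive height, with (b), (c), (d) (including the Northcott argument) and (e) deduced just as in your outline. No gaps; this is the standard proof being referenced.
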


\medskip

\noindent
Note that, in $(e)$ of Lemma \ref{lemma 2}, the implied constant is not explicit. In 1990, Silverman proved a general result which explicitly determines this constant in terms of the $j$-invariant and the discriminant of the elliptic curve. We invoke a specific case of this result here.(See [\cite{c3}, pp.$726$.])
\medskip

\begin{lemma}\label{lemma 3}
    Let, $E/\mathbb Q$ be an elliptic curve given by the Weierstrass equation 
    \begin{center}
        $y^2 = x^3 + B$ , then for every $P\in E(\Bar{\mathbb Q}) $
    \end{center}
\vspace{0.8 Pt}    
    \begin{center}
          $-\frac{1}{6}h(B)-1.48 \leq \hat{h}(P) - \frac{1}{2}h_x(P)\leq \frac{1}{6}h(B) + 1.576 $
    \end{center}
   
\end{lemma}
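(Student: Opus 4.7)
The plan is to derive this as a direct specialization of Silverman's main theorem in \cite{c3}, which for a general elliptic curve $E/\mathbb{Q}$ in Weierstrass form supplies explicit numerical constants $c_1,c_2$ such that
\[
-\tfrac{1}{24}\,h(j_E)-\tfrac{1}{12}\,h(\Delta_E)-c_1 \;\leq\; \hat{h}(P)-\tfrac{1}{2}h_x(P) \;\leq\; \tfrac{1}{12}\,h(j_E)+\tfrac{1}{12}\,h(\Delta_E)+c_2
\]
holds for every $P\in E(\bar{\mathbb Q})$, where $h$ is the absolute logarithmic Weil height. Since the author explicitly invokes a specific case of that result, my proof amounts to feeding in the invariants of the short Weierstrass model $y^2=x^3+B$ and simplifying.

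First I would compute those invariants. With $a_1=a_2=a_3=a_4=0$ and $a_6=B$, the standard Weierstrass formulas yield $c_4=0$, $c_6=-864B$, and hence
\[
\Delta_E=-16\bigl(4\cdot 0^3+27 B^2\bigr)=-432\,B^2,\qquad j_E=\frac{c_4^{\,3}}{\Delta_E}=0.
\]
Because $j_E=0$, every $h(j_E)$ term in Silverman's inequality vanishes identically, so the two-variable estimate collapses to one involving only $h(\Delta_E)$.

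Next I would bound the discriminant height via standard multiplicativity and sign-invariance of the Weil height:
\[
h(\Delta_E)=h(-432\,B^2)\leq 2\,h(B)+\log 432.
\]
Dividing by $12$ gives $\tfrac{1}{12}h(\Delta_E)\leq \tfrac{1}{6}h(B)+\tfrac{1}{12}\log 432$ on each side of Silverman's inequality. A direct numerical evaluation gives $\tfrac{1}{12}\log 432\approx 0.506$, and absorbing this into the additive constants $c_1,c_2$ from \cite{c3} should produce exactly the explicit lower bound $-\tfrac{1}{6}h(B)-1.48$ and the upper bound $\tfrac{1}{6}h(B)+1.576$ stated in the lemma.

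The main obstacle here is purely bookkeeping rather than conceptual: the delicate place-by-place comparison of the canonical local height with the naive $x$-coordinate height is done once and for all in \cite{c3}, so the only thing I actually need to check is that after substituting $j=0$ and $\Delta=-432B^2$ the constants $c_1+\tfrac{1}{12}\log 432$ and $c_2+\tfrac{1}{12}\log 432$ round precisely to $1.48$ and $1.576$. I would also pay special attention to the direction of the inequalities when absorbing the positive quantity $\tfrac{1}{12}\log 432$ into the lower-bound constant, to make sure no sign is lost in translation.
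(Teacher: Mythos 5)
Your derivation is correct and is essentially the same as the paper's, which simply cites p.~726 of \cite{c3}, where Silverman specializes his main theorem (with additive constants $-0.973$ and $+1.07$) to $y^2=x^3+B$ exactly as you propose. The arithmetic closes as you anticipate: $j=0$, $\Delta=-432B^2$, and $\tfrac{1}{12}\log 432\approx 0.5057$ give $0.5057+0.973\leq 1.48$ for the lower constant and $0.5057+1.07\leq 1.576$ for the upper one, with the inequality directions preserved.
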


\bigskip

\noindent
Once we obtain the estimate of $N_f(m)$ in Theorem \ref{Main theorem}, the explicit result in Corollary \ref{Main result} follows from the existence of a specific elliptic curve of rank 11, which was constructed by Elkies and Rogers in \cite{c4}, as stated below.
\medskip

\begin{theorem}[\textbf{Elkies, Rogers}]\label{prop 1}
     The elliptic curve given by the equation
    \bigskip
    
    \begin{center}
         $x^3 + y^3 = m_0z^3$ or the Weierstrass form $Y^2=X^3-432m_0^2$,
    \end{center}
    
    \bigskip
       
where $m_0=13293998056584952174157235$  has the Mordell-Weil rank $11$.\\

\noindent
Moreover, $\max \{h_x(P_i) \mid 1\leq i  \leq 11 \}=76.61$, where $P_i$ varies over $11$ independent points of the Mordell-Weil group.
\end{theorem}
\noindent
    For the construction of this elliptic curve and the list of 11 independent points; see pp. 192-193 \cite{c4} . The rest follows by simple computation.

\bigskip

\section{\bf Proof of Theorem \ref{Main theorem} and Corollary \ref{Main result}}
\bigskip

\noindent
Consider the elliptic curve $E/\mathbb{Q}$ given by
$$E:x^3+y^3=m_0z^3$$

\noindent
with base point $[1,-1,0]$ and Mordell-Weil rank $r$.\\

\noindent
Observe that any non-torsion point $Q=[x(Q),y(Q),z(Q)]\in E(\mathbb{Q})$ has $z(Q)\neq 0$ because the only point in $E(\mathbb{Q})$ with $z(Q)=0$ is $Q=[1,-1,0]$, the identity element of the Mordell-Weil group.\\

\noindent
 As the rank of $E(\mathbb{Q})$ is $r$, we can choose $r$ independent points $P_1,...,P_r$ from the free part of the group $E(\mathbb{Q})$ and for any point $Q \in E(\mathbb{Q})$, we write $Q=[x(Q),y(Q),z(Q)]$ with $x(Q), y(Q), z(Q) \in \mathbb Z $ and $gcd\left(x(Q),y(Q),z(Q)\right)=1$.\\

\noindent
 Now fix a large positive integer $N$. For each $n=(n_1,...,n_r)\in \mathbb Z^r$ with 
 $1\leq n_i \leq N$, consider the sum
 
 \begin{center}
     $Q_n=n_1P_1+...+n_rP_r$
 \end{center}

\bigskip

\noindent
 which gives $N^r$ distinct points in $E(\mathbb Q)$. Now consider 

 $$
 m=m_0\prod_{n} z(Q_n)^3,
 $$ 
 where the product runs over all $r-$tuples $(n_1,...,n_r)$. Note that, $m\neq 0$ as $z(Q_n)\neq 0$.
\bigskip

\noindent
Hence for each $r$-tuple $n'=(n_1',...,n_r')$, the equation $f(x,y)=x^3+y^3=m$ has the following integral solution
$$(x,y)=\left(x(Q_{n'})\prod_{n\neq n'} z(Q_n)^3 , y(Q_{n'})\prod_{n\neq n'} z(Q_n)^3\right).$$

\noindent
Therefore, we get 
 \begin{center}
      $N_f(m)>N^r$.
 \end{center}
\bigskip

\noindent
We will now use the properties of height function to give an upper bound for $m$ in terms of $N$ and $r$. To do this explicitly, we first transform the elliptic curve $E/\mathbb Q$ into its Weierstrass form and then proceed by using the explicit properties of the height function on that Weierstrass form.

\medskip
\noindent
Consider the following morphism which takes $E/\mathbb Q$ to its Weierstrass form $E'/\mathbb Q$

$$  \Phi : E \rightarrow E'
$$
 defined as

$$ \Phi([x,y,z]) = \begin{cases} \left[12m_0 \, \frac{z}{y+x}, \, 36m_0 \, \frac{y-x}{y+x}, 1\right], &  z\neq 0, \\ [0,1,0], &  z=0, \end{cases} $$

\bigskip
\noindent
where $E': Y^2 = X^3 - 432 \, m_0^2$ denotes the Weierstrass form of $E: x^3+y^3=m_0z^3$.\\

\noindent
Note that, $\Phi$ induces a group homomorphism $\Phi : E(\mathbb Q) \rightarrow E'(\mathbb Q)$ of the corresponding Mordell-Weil groups.

\bigskip
\noindent
Let $Q_n=[x(Q_n),y(Q_n),z(Q_n)]\in E(\mathbb Q)$ be as above. As $z(Q_n)\neq 0$,  we have $$x\left(\Phi(Q_n)\right)=12m_0\frac{z(Q_n)}{y(Q_n)+x(Q_n)}.$$ 

\bigskip
\noindent
For simplicity we write $x , y, z $ for $x(Q_n), y(Q_n)$ and $z(Q_n)$ respectively.\\

\noindent
Hence, 
\begin{align*}
    h_x\left(\Phi(Q_n)\right)&=\log \left(H\left([12m_0\frac{z}{y+x},1]\right)\right)\\
    &= \log \left(H\left([12m_0z,x+y]\right)\right)\\
    &=\log \left(\text{max}\{|\frac{12m_0z}{d}|,|\frac{x+y}{d}|\}\right),
\end{align*}

\noindent
where $d = gcd(12m_0z,x+y)$ \hspace{5Pt} and \hspace{5Pt} $\log \left(\text{max}\{|\frac{12m_0z}{d}|,|\frac{x+y}{d}|\}\right)\geq \log \left(\frac{|12m_0z|}{|d|}\right)$.\\

\noindent
So, 
$$h_x\left(\Phi(Q_n)\right) + \log(|d|)\geq \log (12|m_0|)+ \log(|z|).$$

\bigskip

\noindent
Now, using Lemma \ref{Lemma 1} 
$$h_x\left(\Phi(Q_n)\right) \geq b_{m_0}+ \frac{1}{2}\log(|z|), $$

\noindent
where $b_{m_0}$ is a constant depending on $m_0$. Further, using (e) of Lemma \ref{lemma 2}, we obtain
\bigskip

\begin{center}
    $\log(|z|)\leq 4 \hat{h}(\Phi(Q_n))+c_{m_0}$,
\end{center}

\bigskip
\noindent
where $c_{m_0}$ is another constant depending on $m_0$. \\

\noindent
As $\Phi : E(\mathbb Q) \rightarrow E'(\mathbb Q)$ is group homomorphism,

\bigskip

\begin{center}
    $\hat{h}\left(\Phi(Q_n)\right) = \hat{h}\left(\sum_{i=1}^{r} n_i \Phi(P_i)\right) $.
\end{center}

\bigskip
\noindent
Using (a), (b) and (d) of Lemma \ref{lemma 2}, we deduce the following estimate for $\hat{h}$.

\begin{align*}
    \hat{h}(\Phi(Q_n))\leq (3\times2^{r-1}-2)\max \{ \hat{h}(n_i \Phi(P_i)) \mid 1\leq i  \leq r \}\\
    \leq (3\times2^{r-1}-2)N^2\max \{ \hat{h}(\Phi(P_i)) \mid 1\leq i  \leq r \} 
\end{align*}

\bigskip
\noindent
For ease of notation, write $\hat{h}(\Bar{P}):=\max \{ \hat{h}(\Phi(P_i)) \mid 1\leq i  \leq r \}$, where $\Bar{P}=\Phi(P_i)$ for some $i$.

\bigskip
\noindent
Hence, 
\begin{align*}
    \log (|z(Q_n)|) & \leq 4 (3\times2^{r-1}-2)N^2 \hat{h}(\Bar{P})+c_{m_0}\\
    & \leq (3\times2^{r+1}-7)N^2 \hat{h}(\Bar{P}),
\end{align*}
as we can choose a large $N$ such that $c_{m_0}\leq N^2\hat{h}(\Bar{P})$.

\bigskip
\noindent
Now, we have a total of $N^r$ points $Q_n$ on $E(\mathbb Q)$. Therefore for large $N$,

$$\log|m|=3\sum_{n} \log|z(Q_n)|+\log (|m_0|)\leq (3^2\times2^{r+1}-20) N^{r+2}\hat{h}(\Bar{P}).$$
\noindent
So, we conclude

$$N_f(m)>N^r \geq \frac{1}{(9\times2^{r+1}-20)^{r/r+2}(\hat{h}(\Bar{P}))^{r/r+2}}
(\log |m|)^{r /(r+2)}$$

\bigskip

\noindent
which proves Theorem \ref{Main theorem}, because $N$ can be chosen arbitrarily large, giving infinitely many choices for $m$.

\bigskip

\noindent
The proof of the Corollary \ref{Main result} follows by using the elliptic curve of Theorem \ref{prop 1} in Theorem \ref{Main theorem}. Observe that, using Lemma \ref{lemma 3} on the elliptic curve $Y^2=X^3-432m_0^2$ with $m_0=13293998056584952174157235$, we have 

$$\hat{h}(\Bar{P})\leq 121.767/6 + 76.61/2 + 1.576 = 60.17. $$

\bigskip

\noindent
Now, by putting $r=11$, the required estimate holds for infinitely many integers $m$ and we conclude

 $$N_{f}(m)>4.2\times 10^{-6}(\log |m|)^{11/13}.$$

\bigskip

\section{\bf Concluding remarks}

\medskip

\noindent
It will be interesting to know, if a similar explicit estimate can be obtained for other cubic forms as well. The key idea is to get a result similar to Lemma \ref{Lemma 1} for a general form, then the methods in this article are amenable to give an explicit estimate. 

\section*{Acknowledgments}

\noindent
I thank Dr. Anup Dixit and Sushant Kala for their invaluable support, guidance and numerous fruitful discussions. I also thank Prof. C. L. Stewart for letting me know of the result in \cite{c7}. I am thankful to The Institute of Mathematical Sciences, Chennai for supporting my stay as a visiting student.

\bigskip

\end{document}